\theoremstyle{plain}
\newtheorem{theorem}{Theorem}[section]
\newtheorem{cor}[theorem]{Corollary}
\theoremstyle{definition}
\theoremstyle{remark}
\numberwithin{equation}{section}
\DeclareMathOperator{\diag}{diag}
\begin{document}

\title [Fourier series of H\"{o}lder continuous functions] {Almost everywhere convergence of Fourier series on SU(2): the case of
H\"{o}lder continuous functions }

\author{David Grow and Donnie Myers}
\address {Missouri University of Science and Technology\\ Department of Mathematics and Statistics\\ Rolla, Missouri 65409-0020, USA.}

\let\oldthefootnote\thefootnote
\renewcommand{\thefootnote}{\fnsymbol{footnote}}
\footnotetext{Email addresses: {grow@mst.edu},{dfm@mst.edu}}
\let\thefootnote\oldthefootnote

\begin{abstract}
We consider an aspect of the open problem: Does every square-integrable function on $SU(2)$ have an almost
everywhere convergent Fourier series?  Let $0 < \alpha < 1$.  We show that to each countable set $E$ in
$SU(2)$ there corresponds an $\alpha$-H\"{o}lder continuous function on $SU(2)$ whose Fourier series diverges
on $E$.  We also show that the Fourier series of each $\alpha$-H\"{o}lder continuous function on $SU(2)$
converges almost everywhere.
\end{abstract}

\keywords{Fourier series, the two-dimensional special unitary group, H\"{o}lder continuous functions}

\subjclass[2010]{22E30, 43A50}

\date{2020-03-24}
\maketitle

\section{Introduction}

The Peter-Weyl theorem suggests the study of the formal Fourier series \\${\sum d_{\lambda}(\chi_{\lambda}\ast f)}$ of a
function $f$ on a compact, connected, semisimple Lie group $G$.  Here the sum is over the equivalence classes
of continuous irreducible unitary representations of $G$, $d_{\lambda}$ is the degree of the representation, and
$\chi_{\lambda}$ is its character.  The vast literature of Fourier analysis on $G$ is primarily concerned with mean
convergence or divergence of the Fourier series of $f \in L^p(G)$ (e.g.\cite{GST,GLZ,S,ST}), uniform or absolute
convergence of the partial sums if $f$ is smooth (e.g.\cite{C,GLZ,Ma1,DFM,MG,R1,R2,Sug,T}), almost everywhere convergence
or divergence of the partial sums if $f$ is a central function in $L^p(G)$ (e.g.\cite{CGTV,GT}),
and uniform, mean, or almost everywhere summability of the partial sums if $f$ belongs to various subspaces of $L^1(G)$
(e.g.\cite{CF,CGT,GLZ,Z}). The aim of this work is to advance the study of almost everywhere convergence or divergence of
Fourier partial sums of nonsmooth, possibly noncentral functions in $L^2(G)$.

Let $SU(2)$ denote the two-dimensional special unitary group.  We show that to each $\alpha$ in $(0,1)$ and to each
countable subset $E$ of $SU(2)$ there corresponds an $\alpha$-H\"{o}lder continuous function on $SU(2)$ whose
Fourier series diverges at each $x$ in $E$.  Since it is possible to arrange that such a set $E$ is dense in $SU(2)$,
the Fourier series of the corresponding function is divergent at infinitely many points in every nonempty open subset
of $SU(2)$.  Nevertheless, the Fourier series of each $\alpha$-H\"{o}lder continuous function on $SU(2)$
converges almost everywhere on $SU(2)$.

In fact, relying on a general almost everywhere convergence result of Dai \cite{D} for Fourier-Laplace series on
spheres, it follows that if $f$ in $L^2(SU(2))$ has an integral modulus of continuity $\Omega(f,t)$ satisfying
\begin{equation*}
\int_0^1\frac{\Omega^2(f,t)}{t}dt < \infty,
\end{equation*}
then the sequence of Fourier partial sums $\{S_{N}f(x)\}_{N=1}^{\infty}$ converges to $f(x)$ almost everywhere on $SU(2)$.
In particular, if $f$ is an $\alpha$-H\"{o}lder continuous function on $SU(2)$ for some $\alpha$ in $(0,1)$, or more generally
if $\Omega(f,t) = O(t^{\alpha})$ for some $\alpha$ in $(0,1)$, then the Fourier partial sums of $f$ converge to $f(x)$
almost everywhere on $SU(2)$.

It is still an open problem whether
\begin{equation}\label{aeconvergence}
  \underset{N\rightarrow\infty}{\lim}S_{N}f(x) = f(x)
\end{equation}
holds almost everywhere for every $f$ in $L^2(SU(2))$. It follows from a result of Pollard \cite{P} on Jacobi series that
if $f$ is a central function in $L^p(SU(2))$ for some $p > 4/3$, then (\ref{aeconvergence}) holds almost everywhere. On the
other hand, a general theorem of Stanton and Tomas \cite{ST} for compact, connected, semisimple Lie groups shows that if
$p < 2$ then there correspond an $f$ in $L^p(SU(2))$ and a subset $E$ of $SU(2)$ of full measure such that (\ref{aeconvergence})
fails for all $x$ in $E$.  Finally, the Peter-Weyl theorem implies that to each $f$ in $L^2(SU(2))$ there corresponds an
increasing sequence $\{N_j\}$ of positive integers such that
\begin{equation*}
  \underset{j\rightarrow\infty}{\lim}S_{N_j}f(x) = f(x)
\end{equation*}
for almost every $x$ in $SU(2)$. This lends some hope for a positive answer to the problem (\ref{aeconvergence}), as do
the results in this paper.

\section{Preliminaries}

Let $SU(2)$ denote the two-dimensional special unitary group. General matrices $x,y \in SU(2)$ can be expressed as
\begin{equation*}
x=\left[\begin{array}{cc}
\alpha_{1}+i\alpha_{2} & \beta_{1}+i\beta_{2}\\
-\left(\beta_{1}-i\beta_{2}\right) & \alpha_{1}-i\alpha_{2}
\end{array}\right], \quad
y=\left[\begin{array}{cc}
\gamma_{1}+i\gamma_{2} & \delta_{1}+i\delta_{2}\\
-\left(\delta_{1}-i\delta_{2}\right) & \gamma_{1}-i\gamma_{2}
\end{array}\right]
\end{equation*}
where $\alpha_{1},\alpha_{2},\beta_{1},\beta_{2},\gamma_{1},\gamma_{2},\delta_{1},\delta_{2}$ are real numbers satisfying
\begin{equation*}
\alpha_{1}^2+\alpha_{2}^2+\beta_{1}^2+\beta_{2}^2 = 1 = \gamma_{1}^2+\gamma_{2}^2+\delta_{1}^2+\delta_{2}^2.
\end{equation*}
Equip $SU(2)$ with the left and right translation invariant metric $d$ given by
\begin{align*}
d(x,y)=& \sqrt{\frac{1}{2}\textrm{tr}((x-y)(x-y)^{*})} \\
      =& \left((\alpha_1-\gamma_1)^2+(\alpha_2-\gamma_2)^2+(\beta_1-\delta_1)^2+(\beta_2-\delta_2)^2\right)^{1/2}. \notag
\end{align*}

Let $0 < \alpha < 1$ and let $f$ be a real function on $SU(2)$. If there exists a real number $M \geq 0$ such that
\begin{equation*}
\vert f(x)-f(y) \vert \leq Md^{\alpha}(x,y)
\end{equation*}
for all $x,y \in SU(2)$, then we say that $f$ is an $\alpha$-H\"{o}lder continuous function on $SU(2)$ and write
$f\in\textrm{Lip}_{\alpha}(SU(2))$.

Let $\mu$ denote normalized Haar measure on $SU(2)$. If $f$ and $g$ are Haar-integrable functions on $SU(2)$, then
their convolution product is defined for all $x \in SU(2)$ by
\begin{equation*}
(f \ast g)(x)=\int_{SU(2)}f(xy^{-1})g(y)d\mu(y).
\end{equation*}
If $f \in \textrm{L}^1(SU(2))$ and $f \ast g = g \ast f$ for all $g \in \textrm{L}^1(SU(2))$, then we call
$f$ a central function on $SU(2)$.  This is equivalent to the property that for $\mu$-almost every $x \in SU(2)$,
$f(yxy^{-1})=f(x)$ for all $y \in SU(2)$.  In particular, since every $x \in SU(2)$ is diagonalizable via a similarity transformation:
\begin{equation*}
yxy^{-1} =
  \begin{bmatrix}
    &e^{i\theta} &0 \\
    &0  &e^{-i\theta}
  \end{bmatrix}
  \equiv \omega(\theta)
\end{equation*}
where $y \in SU(2)$ and $e^{\pm i\theta}$ are the eigenvalues of $x$, it follows that if $f$ is central then for
$\mu$-almost every $x \in SU(2)$,
\begin{equation*}
f(x) = f(\omega(\theta))
\end{equation*}
where $\theta \in [0,\pi]$. Furthermore, we have
\begin{equation}\label{CentralIntegral}
  \int_{SU(2)}f(x)d\mu(x)=\frac{2}{\pi}\int_{0}^{\pi}f(\omega(\theta))\sin^2(\theta)d\theta
\end{equation}
when $f$ is central; this is a special case of equation (\ref{HaarIntegral}) below.

We denote the family of all (inequivalent) continuous, irreducible, unitary representations of $SU(2)$ by
$\{\pi_n\}_{n=0}^{\infty}$ (cf. pp. 125-136 in vol. 2 of \cite{HR}). Observe that $\pi_n$
has dimension $n+1$ and its character $\chi_n$ is the continuous central function on $SU(2)$ given by
\begin{equation*}
\chi_n(x) = \textrm{trace}\left(\pi_n(x)\right).
\end{equation*}
If $e^{\pm i\theta}$ are the eigenvalues of $x$, then
\begin{equation*}
\chi_n(x) = \chi_n(\omega(\theta)) = \frac{\sin((n+1)\theta)}{\sin(\theta)};
\end{equation*}
the rightmost member this identity is the $n^{\text{th}}$ Chebyshev polynomial of the second kind on $[0,\pi]$.

The Dirichlet kernel $\{\mathbf{D}_N\}_{N=0}^{\infty}$ on $SU(2)$ is the sequence of continuous central
functions given by
\begin{equation*}
\mathbf{D}_N(x)=\sum_{n=0}^N (n+1)\chi_n(x).
\end{equation*}
The $N$th Fourier partial sum of a function $f \in \textrm{L}^1(SU(2)$ is the continuous function on
$SU(2)$ given by
\begin{equation*}
S_Nf(x) = (f \ast \mathbf{D}_N)(x).
\end{equation*}

\section{Divergence of Fourier partial sums on a countable subset}

\begin{theorem}\label{divergence}
Let $\alpha\in(0,1)$ and let $\{x_{i}\}_{i=1}^{\infty}$ be any countable subset of $SU(2)$. Then there exists a function
$f\in\textrm{Lip}_{\alpha}(SU(2))$ such that
\[
\underset{N\geq1}{\sup}|S_{N}f(x_{i})|=\infty
\]
for all $i=1,2,3,\ldots$.
\end{theorem}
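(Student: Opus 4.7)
The plan is to combine the Banach--Steinhaus principle with a Baire category argument, so that producing a \emph{single} witness of unboundedness suffices. Equip $X:=\textrm{Lip}_\alpha(SU(2))$ with its standard norm $\|f\|_\alpha:=\|f\|_\infty+\sup_{x\neq y}|f(x)-f(y)|/d(x,y)^\alpha$, making it a Banach space. For fixed $x\in SU(2)$ and $N\geq 1$ the functional $T_N^x(f):=S_Nf(x)=(f\ast\mathbf{D}_N)(x)$ is continuous on $X$, with norm at most $\|\mathbf{D}_N\|_1$. If one can show that $\sup_N\|T_N^x\|_{X^\ast}=\infty$ for every $x$, then the uniform boundedness principle makes $B_x:=\{f\in X:\sup_N|S_Nf(x)|=\infty\}$ residual in $X$, and the countable intersection $\bigcap_{i\geq 1}B_{x_i}$ remains residual in the Baire space $X$, hence nonempty; any element of that intersection proves the theorem.

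To establish $\sup_N\|T_N^x\|_{X^\ast}=\infty$ it suffices to exhibit \emph{one} $g\in X$ with $\sup_N|S_Ng(e)|=\infty$: the left translate $g_x(y):=g(x^{-1}y)$ has the same $\|\cdot\|_\alpha$-norm (as $d$ is left-invariant), and the identity $g_x(xy^{-1})=g(y^{-1})$ inside the convolution integral immediately gives $S_N(g_x)(x)=S_Ng(e)$, so the same $g$ serves every $x$. I will take $g$ to be central. The eigenvalue-angle map $\theta:SU(2)\to[0,\pi]$ is Lipschitz: by bi-invariance of $d$ the minimum distance between the conjugacy classes with angles $\theta_1,\theta_2$ equals $2|\sin((\theta_1-\theta_2)/2)|$, so $d(x,y)\geq 2|\sin((\theta(x)-\theta(y))/2)|$, and $\arcsin(u)\leq(\pi/2)u$ on $[0,1]$ then gives $|\theta(x)-\theta(y)|\leq(\pi/2)\,d(x,y)$. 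Hence for any $\alpha$-H\"older $G:[0,\pi]\to\R$ the composition $g(x):=G(\theta(x))$ lies in $X$.

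For such central $g$, the orthogonality $\chi_n\ast\chi_m=\delta_{nm}\chi_n/(n+1)$ yields $S_Ng(e)=\sum_{n=0}^N(n+1)c_n$ with $c_n:=\int g\,\chi_n\,d\mu$, and using (\ref{CentralIntegral}) together with the identity $\sin\theta\sin((n+1)\theta)=\frac12[\cos(n\theta)-\cos((n+2)\theta)]$ rewrites $c_n=(\beta_n-\beta_{n+2})/2$, where the $\beta_n$ are the Fourier-cosine coefficients of $G$ on $[0,\pi]$. A summation by parts then produces
\begin{equation*}
S_Ng(e)=\sum_{m=0}^N\beta_m-\frac{\beta_0}{2}-\frac{N\beta_{N+1}+(N+1)\beta_{N+2}}{2}.
\end{equation*}
Take the lacunary Weierstrass-type function $G(\theta):=\sum_{k\geq 1}2^{-k\alpha}\cos(2^k\theta)$, which is $\alpha$-H\"older on $\T$ (hence on $[0,\pi]$) and has $\beta_{2^k}=2^{-k\alpha}$, all other $\beta_n=0$. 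Then $\sum_{m\leq N}\beta_m$ stays bounded by $\sum_k 2^{-k\alpha}$, while along the subsequence $N=2^K-1$ the boundary term $\frac{N\beta_{N+1}}{2}=\frac{(2^K-1)2^{-K\alpha}}{2}\asymp 2^{K(1-\alpha)}$ tends to $\infty$ since $\alpha<1$. Thus $|S_{2^K-1}g(e)|\to\infty$, supplying the required witness.

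The only genuine work is this lacunary central construction together with the global Lipschitz property of the eigenvalue-angle map (which ensures $G\circ\theta\in X$); the translation identity, the summation-by-parts formula, and the final Banach--Steinhaus/Baire deduction are all routine.
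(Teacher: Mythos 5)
Your proposal is correct, and while it shares the paper's overall architecture (Banach--Steinhaus at each point, left translation to move the singularity from $e$ to $x_i$, Baire category to intersect the residual sets $B_{x_i}$), the key analytic step is genuinely different. The paper never exhibits a single divergence witness: it builds, for each $n$, a sawtooth test function $f_n$ with uniformly bounded $\textrm{Lip}_{\alpha}$-norm adapted to the sign pattern of $\cos((n+\tfrac32)\theta)$, and after the integration-by-parts identity $\mathbf{D}_n(\omega(\theta))=-D'_{n+1}(\theta)/(2\sin\theta)$ shows $|S_nf_n(e)|\gtrsim n$ against a $\log n$ bound for the remaining term; this yields the quantitative estimate $\Vert\varLambda_n^e\Vert\gtrsim n$ and only then invokes category to get divergence at $e$. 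You instead produce one explicit central function $g=G\circ\theta$ with $G$ a lacunary Weierstrass series, compute $S_Ng(e)$ exactly via the identity $c_n=\tfrac12(\beta_n-\beta_{n+2})$ and Abel summation, and read off divergence along $N=2^K-1$ from the boundary term $N\beta_{N+1}/2\asymp N^{1-\alpha}$; unboundedness of the functionals is then immediate. All the supporting steps check out: the lower bound $d(x,y)\ge 2|\sin((\theta(x)-\theta(y))/2)|$ for the eigenvalue-angle map is correct (minimize $\overrightarrow{u}\cdot\overrightarrow{v}$ over the two latitude $2$-spheres in $S^3$), the summation-by-parts formula is right, and the classical $\alpha$-H\"older bound for the lacunary series holds for $0<\alpha<1$. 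What your route buys is an explicit $\textrm{Lip}_{\alpha}$ function with divergent Fourier series at a prescribed point (no category argument needed for a single point) and a transparent mechanism for the divergence, namely the weight $n+1$ in $\mathbf{D}_N$ amplifying the coefficient $\beta_{N+1}\sim N^{-\alpha}$; what the paper's route buys is the sharper quantitative information $\Vert\varLambda_n^e\Vert\gtrsim n$ on the growth of the functional norms, which your argument does not directly provide.
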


\begin{proof}
Recall that for $\alpha \in (0,1)$, $\textrm{Lip}_{\alpha}(SU(2))$ is a Banach space with norm
\begin{equation*}
\Vert f\Vert_{\textrm{Lip}_{\alpha}(SU(2))}=\underset{x\in SU(2)}{\sup}|f(x)|+\underset{\underset{x\neq y}{x,y\in}SU(2)}{\sup}\frac{|f(x)-f(y)|}{d^{\alpha}(x,y)}.
\end{equation*}
Fix $x \in SU(2)$ and $n \in \mathbb{N}$, and set $\varLambda_{n}^{x}(f)=S_{n}f(x)$.  Each $\varLambda_{n}^{x}$ is a bounded linear
functional on $\textrm{Lip}_{\alpha}(SU(2))$ of norm
\begin{equation*}
  \Vert\varLambda_{n}^{x}\Vert =\sup\left\{ |S_{n}f(x)|: f\in\textrm{Lip}_{\alpha}(SU(2)),\;\Vert f\Vert_{\textrm{Lip}_{\alpha}(SU(2))}\leq1\right\} \leq \Vert\boldsymbol{D}_{n}\Vert_{L^{1}(SU(2))}.
\end{equation*}
Specializing to the case when $x = e$, the identity matrix in $SU(2)$, we have
\begin{equation*}
  \varLambda_{n}^{e}(f)=(f\ast\boldsymbol{D}_{n})(e)=
      \int_{SU(2)}f(y)\boldsymbol{D}_{n}(y)d\mu(y)=
      \frac{2}{\pi}\int_{0}^{\pi}f(\omega(\theta))\boldsymbol{D}_{n}(\omega(\theta))\sin^2(\theta)d\theta
\end{equation*}
by (\ref{CentralIntegral}). Note that
\begin{equation*}
  \boldsymbol{D}_{n}(\omega(\theta))=\frac{-1}{2\sin(\theta)}D_{n+1}^{\prime}(\theta)
\end{equation*}
where
\begin{equation*}
  D_n(t)=\sum_{j=-n}^{n}e^{ijt}=1+2\sum_{j=1}^{n}\cos(jt)=\frac{\sin((2n+1)t/2)}{\sin(t/2)}
\end{equation*}
is the Dirichlet kernel on $[-\pi,\pi]$.  It follows that
\begin{align*}
\varLambda_{n}^{e}(f)
  =&\frac{1}{\pi}\int_{0}^{\pi}f(\omega(\theta))\cos^{2}\left(\frac{\theta}{2}\right)D_{n+1}(\theta)d\theta\\
   &\quad - \frac{(2n+3)}{\pi}\int_{0}^{\pi}f(\omega(\theta))\cos\left(\left(n+\frac{3}{2}\right)\theta\right)\cos\left(\frac{\theta}{2}\right)d\theta.
\end{align*}

The absolute maxima and minima of the function $h_n(\theta)=\cos\left(\left(n+\frac{3}{2}\right)\theta\right)$ on $[0,\pi]$
occur at the endpoints of the intervals $I_{k}=\left[\frac{2k\pi}{2n+3},\frac{2(k+1)\pi}{2n+3}\right]$ where $k \in \{0,1,2,...,n\}$.
Let $g_n$ be the sawtooth function on $[0,\pi]$ determined by $g_n\left(\frac{2k\pi}{2n+3}\right) = (-1)^k$ for $0\leq k \leq n+1$,
$g_n(\pi)=0$, and $g_n$ is piecewise linear between these points.  Define a central function $f_n$ on $SU(2)$ by
$f_n(\omega(\theta)) = g_n(\theta)$ for $\theta \in [0,\pi]$. It is easy to see that $f_n$ belongs to $\textrm{Lip}_{\alpha}(SU(2))$;
in fact,
\begin{equation*}
  \frac{\vert f_n(x)-f_n(y) \vert}{d^{\alpha}(x,y)} \leq \left(\frac{\pi}{2}\right)^{\alpha}\left(\frac{2\pi}{2n+3}\right)^{1-\alpha}
    \leq \pi
\end{equation*}
for all distinct matrices $x$ and $y$ in $SU(2)$.

Since $g_n(\theta)\cos\left(\left(n+\frac{3}{2}\right)\theta\right) \geq g_n^2(\theta) \geq 0$ on each interval $I_k$ and on
$\left[\frac{2(n+1)\pi}{2n+3},\pi\right]$, and since the function $\theta \mapsto \cos(\theta/2)$ is positive and decreasing
on $[0,\pi)$, it follows that
\begin{align*}
   \int_{I_k}f_n(\omega(\theta))\cos\left(\left(n+\frac{3}{2}\right)\theta\right)\cos(\theta/2)d\theta
        &\geq \int_{I_k}g_n^2(\theta)\cos(\theta)d\theta \\
        &\geq \cos\left(\frac{(k+1)\pi}{2n+3}\right)\int_{I_k}g_n^2(\theta)d\theta \\
        &= \frac{2\pi}{3(2n+3)}\cos\left(\frac{(k+1)\pi}{2n+3}\right)
\end{align*}
for all $k \in \{0,1,2,...,n\}$ and
\begin{equation*}
\int_{\frac{2(n+1)\pi}{2n+3}}^{\pi}f_n(\omega(\theta))\cos\left(\left(n+\frac{3}{2}\right)\theta\right)\cos(\theta/2)d\theta \geq 0.
\end{equation*}
Adding these inequalities we obtain
\begin{align*}
\intop_{0}^{\pi}f_n(\omega(\theta))\cos\left(\left(n+\frac{3}{2}\right)\theta\right)\cos\left(\frac{\theta}{2}\right)d\theta
    & \geq \frac{2}{3}\left(\frac{\pi}{2n+3}\right)\sum_{k=1}^{n+1}\cos\left(\frac{k\pi}{2n+3}\right)\\
    & = \frac{2}{3}\left(\frac{\pi}{2n+3}\right)\left\{ D_{n+1}\left(\frac{\pi}{2n+3}\right)-1\right\},
\end{align*}
and hence
\begin{equation*}
\biggl|\frac{(2n+3)}{\pi}\int_{0}^{\pi}f_n(\omega(\theta))\cos\left(\left(n+\frac{3}{2}\right)\theta\right)\cos\left(\frac{\theta}{2}\right)d\theta\biggr|
\geq\frac{2}{3}\left\{ D_{n+1}\left(\frac{\pi}{2n+3}\right)-1\right\} .
\end{equation*}
Since the function $\theta\mapsto f_n(\omega(\theta))\cos^{2}(\theta/2)$ is uniformly bounded by $1$ on $[0,\pi]$,
\begin{align*}
\biggl |\frac{1}{\pi}\int_{0}^{\pi}f_n(\omega(\theta))\cos^{2}(\theta/2)D_{n+1}(\theta)d\theta \biggr|
    & \leq \frac{1}{\pi}\intop_{0}^{\pi}|D_{n+1}(\theta)|d\theta \\
    & = \frac{4}{\pi^{2}}\log(n+1)+o(1)
\end{align*}
as $n\rightarrow\infty$. Consequently
\begin{equation*}
\frac{|\varLambda_{n}^{e}(f_n)|}{\Vert f_n\Vert_{\textrm{Lip}_{\alpha}(SU(2))}}
    \geq \frac{\frac{2}{3}\left\{ D_{n+1}\left(\frac{\pi}{2n+3}\right)-1\right\}
        -\left(\frac{4}{\pi^{2}}\log(n+1)+o(1)\right)}{1+\pi}.
\end{equation*}
But $D_{n+1}\left(\frac{\pi}{2n+3}\right) = \left(\sin\left(\frac{\pi}{2(2n+3)}\right)\right)^{-1} \geq \frac{2(2n+3)}{\pi}$
and hence
\begin{equation*}
\Vert\varLambda_{n}^{e}\Vert =
\sup\left\{\frac{|\varLambda_{n}^{e}(f)|}{\Vert f\Vert_{\textrm{Lip}_{\alpha}(SU(2))}}:\quad f\in\textrm{Lip}_{\alpha}(SU(2)),
    \quad f\neq 0 \right\}
\end{equation*}
is asymptotically bounded below by
\begin{equation*}
    \frac{\frac{2}{\pi}(2n+3)-\frac{4}{\pi^{2}}\log(n+1)}{1+\pi}
\end{equation*}
as $n\rightarrow\infty$.  Consequently, the sequence of bounded linear functionals
\begin{equation*}
    \varLambda_{n}^{e}(f)=S_{n}f(e)
\end{equation*}
is not uniformly bounded on the Banach space $\textrm{Lip}_{\alpha}(SU(2))$ as $n\rightarrow\infty$. By the uniform
boundedness principle
\begin{equation*}
    \underset{n\geq1}{\sup}|S_{n}f(e)| = \infty
\end{equation*}
for all $f$ belonging to some dense $G_{\delta}$ set in $\textrm{Lip}_{\alpha}(SU(2))$.

If $z \in SU(2)$, define the left translation operator $L_z$ on $\textrm{Lip}_{\alpha}(SU(2))$ by $L_{z}f(y) = f(zy)$
for all $y \in SU(2)$.  For each element of the countable subset $\{x_{i}\}_{i=1}^{\infty}$ of $SU(2)$, observe that
\begin{equation*}
\frac{|\varLambda_{n}^{x_i}\left(L_{{x_i}^{-1}}f_{n}\right)|}{\Vert L_{{x_i}^{-1}}f_{n}\Vert_{\textrm{Lip}_{\alpha}(SU(2))}}
  =\frac{|\varLambda_{n}^{e}\left(f_{n}\right)|}{\Vert f_{n}\Vert_{\textrm{Lip}_{\alpha}(SU(2))}},
\end{equation*}
so there corresponds a dense $G_{\delta}$ subset $E_{x_i}$ of $\textrm{Lip}_{\alpha}(SU(2))$ such that
\begin{equation*}
    \underset{n\geq1}{\sup}|S_{n}f(x_i)|=\infty
\end{equation*}
for all $f \in E_{x_i}$. By the Baire category theorem $E=\bigcap_{n=1}^{\infty}E_{x_{i}}$ is dense in
$\textrm{Lip}_{\alpha}(SU(2))$. In particular, $E$ is nonempty and any $f \in E$ gives the desired conclusion.
\end{proof}

The problem of pointwise convergence for the Fourier series of central functions in $\textrm{Lip}_{\alpha}(SU(2))$
for some $\alpha \in (0,1)$ is related to an analogous problem for Fourier-Jacobi series of functions
$f:[-1,1] \rightarrow \mathbb{R}$.  Recall that the Chebyshev polynomials of the second kind $U_n (n=0,1,2,...)$
are a special case of the Jacobi polynomials $P_n^{(\alpha,\beta)}$ with $\alpha = \beta = 1/2$; i.e.
\begin{equation*}
  U_n(\cos(\theta))= P_n^{(1/2,1/2)}(\cos(\theta)) = \frac{\sin((n+1)\theta)}{\sin(\theta)} \quad (n=0,1,2,...;\theta \in [0,\pi]).
\end{equation*}
Clearly $\{U_n\}_{n=0}^{\infty}$ is an orthogonal set of functions with respect to the inner product
\begin{equation*}
  \langle F,G \rangle = \int_{-1}^{1}F(t)G(t)\sqrt{1-t^2}dt.
\end{equation*}
If $F \in L^2([-1,1],\sqrt{1-t^2}dt)$, let $s_N(F;t)$ denote the $N$th partial sum of the Fourier series of $F$
with respect to $\{U_n\}_{n=0}^{\infty}$.  If $F$ has a modulus of continuity $\omega(F,h)$ satisfying the Dini-Lipschitz
condition
\begin{equation*}
  \underset{h\rightarrow 0^{+}}{\lim}\left(\omega(F,h)\log(h^{-1})\right)= 0,
\end{equation*}
then a classical theorem \cite{Sue} assures $s_N(F;t) \rightarrow F(t)$ uniformly on any interval $[-1+\delta, 1-\delta]$
with $\delta \in (0,1)$.  Furthermore, a general theorem of Belen'kii \cite{B} guarantees that if $F$ satisfies a
Dini-Lipschitz condition and $\{s_N^{(\alpha,\beta)}(F;\pm 1)\}_{N=0}^{\infty}$ converge for some $\alpha > -1$ and
$\beta > -1$, then the Fourier-Jacobi series of $F$ converges uniformly to $F$ on $[-1,1]$.  Observe that a central
function $f \in L^2(SU(2))$ corresponds to a function $F \in L^2([-1,1],\sqrt{1-t^2}dt)$ via $F(\cos(\theta)) = f(x)$
where $e^{i\theta},e^{-i\theta}$ are the eigenvalues of $x \in SU(2)$.  It is easy to check that the Fourier partial
sums of $F$ and $f$ satisfy the identity
\begin{equation*}
  s_N(F;\cos(\theta)) = (S_Nf)(x) \quad (N=0,1,2,...),
\end{equation*}
and if $f \in \textrm{Lip}_{\alpha}(SU(2))$ for some $\alpha \in (0,1)$ then $F$ satisfies a Dini-Lipschitz condition.
This leads to the following result.
\begin{theorem}\label{central}
  Let $f$ be a central function in $\textrm{Lip}_{\alpha}(SU(2))$ for some $\alpha \in (0,1)$.  Then:
  \begin{enumerate}[label=(\alph*)]
    \item $S_Nf(x) \rightarrow f(x)$ uniformly outside any open set containing $\{e,-e\}$;
    \item $S_Nf(x) \rightarrow f(x)$ uniformly on $SU(2)$ if $\{S_Nf(\pm e)\}_{N=0}^{\infty}$ converge.
  \end{enumerate}
\end{theorem}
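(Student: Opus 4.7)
The plan is to reduce Theorem \ref{central} to the two cited results on Chebyshev--$U$ and Fourier--Jacobi series by passing to the companion function $F:[-1,1]\to\mathbb{R}$ defined by $F(\cos\theta)=f(\omega(\theta))$ and invoking the transfer identity $s_N(F;\cos\theta)=(S_Nf)(x)$ displayed just before the theorem. Note that $e=\omega(0)$ and $-e=\omega(\pi)$ correspond to the endpoints $t=\pm 1$, and that the continuous map $x\mapsto\tfrac{1}{2}\mathrm{tr}(x)=\cos\theta$ carries the complement of any open neighborhood of $\{e,-e\}$ onto a compact subset of $(-1,1)$; that is, into some interval $[-1+\delta,1-\delta]$ with $\delta>0$.

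The first substantive step is to verify that $F$ has a modulus of continuity satisfying the Dini--Lipschitz condition. A direct computation of the trace form yields
\begin{equation*}
d(\omega(\theta_1),\omega(\theta_2))=2\left|\sin\left(\tfrac{\theta_1-\theta_2}{2}\right)\right|\leq|\theta_1-\theta_2|,
\end{equation*}
so the $\alpha$-H\"older estimate for $f$ gives $|F(\cos\theta_1)-F(\cos\theta_2)|\leq M|\theta_1-\theta_2|^{\alpha}$. Combined with the elementary fact that $\arccos$ is $\tfrac{1}{2}$-H\"older on $[-1,1]$ (which one reads off from $\arccos(1-\varepsilon)\sim\sqrt{2\varepsilon}$ near the endpoints), this produces the bound $\omega(F,h)\leq Ch^{\alpha/2}$ on the modulus of continuity of $F$, and in particular $\omega(F,h)\log(1/h)\to 0$ as $h\to 0^{+}$.

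With the Dini--Lipschitz condition in hand, part (a) follows at once from the classical Suetin theorem cited above: $s_N(F;t)\to F(t)$ uniformly on every $[-1+\delta,1-\delta]\subset(-1,1)$. Applying the transfer identity on the preimage of $[-1+\delta,1-\delta]$ under $x\mapsto\tfrac{1}{2}\mathrm{tr}(x)$ produces uniform convergence of $S_Nf$ to $f$ on the complement of the given neighborhood of $\{e,-e\}$. For part (b), the transfer identity shows that convergence of $\{S_Nf(\pm e)\}$ is equivalent to convergence of $\{s_N^{(1/2,1/2)}(F;\pm 1)\}$. Belen'kii's theorem with Jacobi parameters $1/2,1/2$, together with the Dini--Lipschitz property of $F$, then upgrades the interior uniform convergence to uniform convergence on all of $[-1,1]$, which retranslates to uniform convergence of $S_Nf$ on $SU(2)$.

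The only mild obstacle is the regularity check: the change of variables $t=\cos\theta$ costs a factor of $\tfrac{1}{2}$ in the H\"older exponent, so one must confirm that a $t$-side exponent only of size $\alpha/2$ still suffices. Since the Dini--Lipschitz condition requires merely $\omega(F,h)=o(1/\log(1/h))$, the reduced exponent $\alpha/2\in(0,\tfrac{1}{2})$ is comfortably enough, and no further analytic input is needed beyond the two cited one-variable theorems.
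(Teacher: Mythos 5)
Your proposal is correct and follows essentially the same route as the paper, which derives Theorem \ref{central} from the transfer identity $s_N(F;\cos\theta)=(S_Nf)(x)$ together with Suetin's theorem for part (a) and Belen'kii's theorem for part (b). The only detail the paper leaves as ``easy to check''---that $f\in\textrm{Lip}_{\alpha}(SU(2))$ forces $F$ to satisfy a Dini--Lipschitz condition---you verify explicitly and correctly via $d(\omega(\theta_1),\omega(\theta_2))=2\left|\sin\left(\tfrac{\theta_1-\theta_2}{2}\right)\right|$ and the $\tfrac{1}{2}$-H\"older continuity of $\arccos$, yielding $\omega(F,h)=O(h^{\alpha/2})$.
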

The authors of this paper originally wondered if they could delete the word ``central'' from the hypothesis of Theorem
\ref{central} and still obtain conclusions (a) and (b).  Theorem \ref{divergence} shows that there is no
possibility of such an analogue of Theorem \ref{central} for general functions in $\textrm{Lip}_{\alpha}(SU(2))$ for
$0 < \alpha < 1$.  This is so because the points of divergence for the Fourier partial sums of such a noncentral
function need no longer be at the ``poles'' $\pm e$. According to Theorem \ref{divergence}, points of divergence
for $\textrm{Lip}_{\alpha}(SU(2))$ functions can be dense in $SU(2)$.

\section{Almost everywhere convergence of Fourier partial sums}

Note that $SU(2)$ is isometrically homeomorphic to the unit sphere $S^3$ in $\mathbb{R}^4$ via the isometry $\eta$
from $SU(2)$ onto $S^3$ given by
\begin{equation*}
\left[\begin{array}{cc}
\alpha_{1}+i\alpha_{2} & \beta_{1}+i\beta_{2}\\
-\beta_{1}+i\beta_{2} & \alpha_{1}-i\alpha_{2}
\end{array}\right] \mapsto \left(\alpha_1,\alpha_2,\beta_1,\beta_2\right).
\end{equation*}
Furthermore, the spherical coordinate system on $S^3$:
\begin{equation*}
  \alpha_{1}= \cos(\theta),\alpha_{2}= \sin(\theta)\cos(\phi),\beta_{1}= \sin(\theta)\sin(\phi)\cos(\psi),
  \beta_{2}= \sin(\theta)\sin(\phi)\sin(\psi),
\end{equation*}
where $\phi \in [0,\pi]$, $\theta \in [0,\pi]$, and $\psi \in [0,2\pi]$,
can be transferred to $SU(2)$ and forms a convenient parametrization thereof:
\begin{equation*}
x(\phi,\theta,\psi)=\left[\begin{array}{cc}
\cos(\theta)+i\sin(\theta)\cos(\phi) & \sin(\theta)\sin(\phi)e^{i\psi}\\
-\sin(\theta)\sin(\phi)e^{-i\psi} & \cos(\theta)-i\sin(\theta)\cos(\phi)
\end{array}\right].
\end{equation*}
We denote by $\Phi$ the mapping from $[0,\pi]\times[0,\pi]\times[0,2\pi]$ onto $SU(2)$ given by
$(\phi,\theta,\psi) \mapsto x(\phi,\theta,\psi)$.

In spherical coordinates, normalized Haar measure $\mu$ on $SU(2)$ satisfies
\begin{equation}\label{HaarIntegral}
\int_{SU(2)}f(x)d\mu(x) =
    \frac{1}{2\pi^2}\int_0^{\pi}\int_0^{\pi}\int_0^{2\pi}
       (f\circ\Phi)(\phi,\theta,\psi))\sin^2(\theta)\sin(\phi)d\psi d\phi d\theta
\end{equation}
for every $f \in \textrm{L}^1(SU(2),\mu) = \textrm{L}^1(SU(2))$ (cf. pp. 133-134 in vol. 2 of \cite{HR}).
Using spherical coordinates, the $N$th Fourier partial sum of a function $f \in \textrm{L}^1(SU(2))$
can be written as
\begin{align*}
(&S_{N}f)(x(\phi_{0},\theta_{0},\psi_{0}))
    =\int_{SU(2)}\mathbf{D}_{N}(y)f(x(\phi_{0},\theta_{0},\psi_{0})y^{-1}(\phi,\theta,\psi))d\mu(y) \\
 & \nonumber
    =\frac{-1}{4\pi^{2}}\int_{0}^{\pi}\int_{0}^{\pi}\int_{0}^{2\pi}D_{N+1}^{\prime}(\theta)
        f(x(\phi_{0},\theta_{0},\psi_{0})y^{-1}(\phi,\theta,\psi))\sin(\theta)\sin(\phi)d\psi d\phi d\theta\\
 &  \nonumber
    =\frac{-1}{\pi}\intop_{0}^{\pi}D_{N+1}^{\prime}(\theta)\sin(\theta)[Q_{x}f](\theta)d\theta,
\end{align*}
where
\begin{equation*}
[Q_{x}f](\theta)=\frac{1}{4\pi}\int_{0}^{\pi}\int_{0}^{2\pi}
    f(x(\phi_{0},\theta_{0},\psi_{0})y^{-1}(\phi,\theta,\psi))\sin(\phi)d\psi d\phi.
\end{equation*}
The above identities for the Fourier partial sums of $f \in L^p(SU(2))$ reduce
convergence problems at a point $x$ in the three-dimensional manifold $SU(2)$ to the behavior of the real function
$[Q_{x}f]$ on the interval $[0,\pi]$. See \cite{DFM} and \cite{MG} for applications of this principle.

Because $SU(2)$ is isometrically homeomorphic to the unit sphere $S^3$, the theory of Fourier series
of functions on $SU(2)$ is closely connected with that of Fourier-Laplace series on $S^3$.  Specifically, if $\sigma_3$
denotes normalized surface measure on $S^3$, then the $N$th partial sum of the Fourier-Laplace series of
$F \in L^2(S^3,\sigma_3)$ is equal to the $N$th partial sum of the Fourier series of $f = F\circ\eta \in L^2(SU(2))$
\cite[pp. 93-94]{DFM}.  Moreover, it follows from (\ref{HaarIntegral}) that $\sigma_{3}(E) = \mu(\eta^{-1}(E))$ for all
spherical boxes
\begin{equation*}
  E = \{\overrightarrow{r}(\phi,\theta,\psi) \in S^3 : (\phi,\theta,\psi) \in [\alpha,\beta]\times[\gamma,\delta]\times[\epsilon,\nu]\}
\end{equation*}
in $S^3$; in particular, sets of zero Haar measure in $SU(2)$ correspond to sets of zero surface measure in $S^3$.

More generally, let $S^n$ denote the unit sphere in $\mathbb{R}^{n+1}$.  For $F \in L^2(S^n,\sigma_n) = L^2(S^n)$ and
$\theta \in (0,\pi)$, define the spherical translation operator $T_{\theta}$ by
\begin{equation*}
(T_{\theta}F)(\overrightarrow{x})=\frac{1}{\vert{S^{n-1}}\vert}\int_{\{\overrightarrow{y}\in S^n :
\overrightarrow{x}^T\overrightarrow{y} = 0 \}}F(\overrightarrow{x}\cos(\theta)+\overrightarrow{y}\sin(\theta))d\nu(\overrightarrow{y})
\end{equation*}
where $\nu$ denotes surface measure on the $n-1$ dimensional sphere
$\{\overrightarrow{y} \in S^n : \overrightarrow{x}^T\overrightarrow{y} = 0\}$.  Let
\begin{equation*}
(\Delta_{\theta}F)(\overrightarrow{x})= F(\overrightarrow{x})-(T_{\theta}F)(\overrightarrow{x})
\end{equation*}
denote the difference operator acting on $F \in L^2(S^n)$, and for $t>0$ let
\begin{equation*}
  \omega(F,t)=\sup\{\Vert\Delta_{\theta}F\Vert_{L^2(S^n)}:0 < \theta \leq t\}
\end{equation*}
denote the integral modulus of continuity of $F \in L^2(S^n)$. The following result is the $r=2$ case of Theorem 2
obtained by Feng Dai in \cite{D}.

\begin{theorem}\label{Dai}
Let $F \in L^2(S^n)$. If
\begin{equation*}
\int_0^1\frac{\omega^2(F,t)}{t}dt < \infty
\end{equation*}
then the Fourier-Laplace partial sums of $F$ at $\overrightarrow{x}$ converge to $F(\overrightarrow{x})$ $\sigma_n$-almost everywhere.
\end{theorem}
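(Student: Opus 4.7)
The plan is to pass through the spherical harmonic decomposition $F = \sum_{k=0}^\infty H_k$, where $H_k$ is the orthogonal projection of $F$ onto the space $\mathcal{H}_k$ of degree-$k$ spherical harmonics, so that $S_N F = \sum_{k=0}^N H_k$ is a genuine partial sum of an orthogonal block expansion in $L^2(S^n)$. The spherical translation $T_\theta$ diagonalizes on each $\mathcal{H}_k$ via $T_\theta|_{\mathcal{H}_k} = R_k(\theta)\,I$, where $R_k(\theta) = C_k^\lambda(\cos\theta)/C_k^\lambda(1)$ and $\lambda = (n-1)/2$. Parseval in the spherical harmonic basis therefore gives
\begin{equation*}
\|\Delta_\theta F\|_{L^2(S^n)}^2 = \sum_{k=0}^\infty \bigl(1 - R_k(\theta)\bigr)^2 \|H_k\|_{L^2(S^n)}^2.
\end{equation*}
Using the classical lower bound $1 - R_k(\theta) \geq c > 0$ for $\theta$ of order $1/k$, one obtains the dyadic block estimate
\begin{equation*}
\sum_{2^{j-1} \leq k < 2^{j}} \|H_k\|_{L^2(S^n)}^2 \leq C\,\omega^2(F, 2^{-j}).
\end{equation*}
Since $\int_0^1 \omega^2(F,t)/t\,dt$ is comparable to $\sum_{j \geq 1} \omega^2(F, 2^{-j})$, the hypothesis translates via summation by dyadic scales into $\sum_{k \geq 2} (\log k)\,\|H_k\|_{L^2(S^n)}^2 < \infty$.

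Second, I would convert this weighted summability into almost everywhere convergence of $\{S_N F\}$ by splitting $S_N F = \sigma_N^\lambda F + E_N F$, where $\sigma_N^\lambda F$ denotes the Cesaro mean of critical order $\lambda$. The maximal operator $\sup_N |\sigma_N^\lambda F|$ is bounded on $L^2(S^n)$, which already gives $\sigma_N^\lambda F \to F$ $\sigma_n$-almost everywhere for every $F \in L^2(S^n)$. Abel summation expresses $E_N F$ as a linear combination of the projections $H_k$ weighted by Cesaro coefficients, and on each dyadic range $2^{j-1} \leq N < 2^{j}$, orthogonality of the $H_k$ combined with a Menshov-Rademacher type maximal inequality should yield
\begin{equation*}
\biggl\|\max_{2^{j-1} \leq N < 2^{j}} |E_N F|\biggr\|_{L^2(S^n)}^2 \leq C\,\omega^2(F, 2^{-j}).
\end{equation*}
Summing over $j$ and invoking Borel-Cantelli then yields $E_N F \to 0$ almost everywhere, which combined with $\sigma_N^\lambda F \to F$ a.e. completes the proof.

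The main obstacle is establishing the dyadic maximal estimate for $E_N F$ with only the single $(\log k)$ weight produced by the Dini hypothesis, rather than the $(\log k)^2$ weight that an abstract Rademacher-Menshov argument on a general orthogonal system would require. To save this logarithm, one must exploit the fine structure of the Cesaro kernel of critical order $\lambda$ on $S^n$, combining its pointwise size and oscillation estimates with the block orthogonality of the spherical harmonics and the Plancherel identity for $T_\theta$. This logarithmic compensation is unavailable in a general orthogonal expansion and is specific to the ultraspherical setting on $S^n$; it is the technical heart of the argument and the step where I expect the real work to lie.
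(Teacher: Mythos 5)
First, a point of comparison: the paper does not prove this statement at all --- it is quoted as the $r=2$ case of Theorem 2 of Dai \cite{D} --- so there is no internal argument to measure you against. Your outline is nonetheless the standard route to such a result, and its first half is essentially sound: $T_{\theta}$ acts on the space of degree-$k$ spherical harmonics as the scalar $R_k(\theta)=C_k^{\lambda}(\cos\theta)/C_k^{\lambda}(1)$, Parseval gives $\Vert\Delta_{\theta}F\Vert_2^2=\sum_k(1-R_k(\theta))^2\Vert H_k\Vert_2^2$, and the Dini hypothesis does force $\sum_k(\log k)\Vert H_k\Vert_2^2<\infty$. One repair is needed there: this last deduction does not follow from the block estimate $\sum_{2^{j-1}\le k<2^{j}}\Vert H_k\Vert_2^2\le C\omega^2(F,2^{-j})$ together with $\sum_j\omega^2(F,2^{-j})<\infty$, because $\sum_k(\log k)\Vert H_k\Vert_2^2\asymp\sum_j j\sum_{2^{j-1}\le k<2^{j}}\Vert H_k\Vert_2^2$ would then require the strictly stronger condition $\sum_j j\,\omega^2(F,2^{-j})<\infty$. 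The correct route is the tail estimate $\sum_{k\ge A2^{j}}\Vert H_k\Vert_2^2\le C\omega^2(F,2^{-j})$, valid because $1-R_k(\theta)\ge c>0$ for all $\theta\ge A/k$ and not merely for $\theta$ of order $1/k$; summing that over $j$ counts each $\Vert H_k\Vert_2^2$ roughly $\log_2 k$ times and yields the weighted sum.

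The genuine gap is in the second half. After the reduction, everything rests on the implication: $\sum_k(\log k)\Vert H_k\Vert_2^2<\infty$ implies almost everywhere convergence of $\sum_k H_k$ --- a Kolmogorov--Seliverstov--Plessner theorem for spherical harmonic expansions. You correctly note that the abstract Rademacher--Menshov theorem only handles the weight $(\log k)^2$, and that saving the extra logarithm must exploit the critical-order Ces\`{a}ro kernel on $S^n$; but you then simply assert the dyadic maximal inequality $\bigl\Vert\max_{2^{j-1}\le N<2^{j}}\vert E_NF\vert\bigr\Vert_2^2\le C\,\omega^2(F,2^{-j})$ and defer its proof. That inequality \emph{is} the theorem: as written (no logarithmic factor on a full dyadic block, and no accounting for the low-frequency part of $E_NF$, which involves all $H_k$ with $k\le N$ and not only those in the block) it is not obviously even in the right form, and nothing in your sketch indicates how the kernel estimates would be combined with orthogonality to produce it. Since you explicitly flag this as the step ``where the real work lies'' and do not carry it out, the argument is incomplete precisely at its decisive point. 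The missing step is in fact a known result for eigenfunction expansions of the Laplace--Beltrami operator (due to Meaney, and underlying Dai's proof), so your outline could be closed by citation; as a self-contained proof it has a hole.
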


Let us specialize to the case $n=3$ of the previous theorem. If $\overrightarrow{x}=\eta(x(\phi_0,\theta_0,\psi_0))$ then
\begin{align*}
(T_{\theta}F)(\overrightarrow{x}) &
=\frac{1}{4\pi}\int_{0}^{\pi}\int_{0}^{2\pi}F(O(\cos(\theta)\overrightarrow{e_1}+\sin(\theta)\overrightarrow{v}))\sin(\phi)d\psi d\phi \\
& = \frac{1}{4\pi}\int_{0}^{\pi}\int_{0}^{2\pi}F(\cos(\theta)\overrightarrow{x}+\sin(\theta)O\overrightarrow{v})\sin(\phi)d\psi d\phi
\end{align*}
where $\overrightarrow{e_1} = [1,0,0,0]^T$, $\overrightarrow{v}=[0,\cos(\phi),\sin(\phi)\cos(\psi),\sin(\phi)\sin(\psi)]^T$, and
\begin{equation*}
  O = \cos(\theta_0)\diag(1,-1,-1,-1) + \sin(\theta_0)S
\end{equation*}
with
\begin{equation*}
  S =
  \begin{bmatrix}
    0 & \cos(\phi_0) & \sin(\phi_0)\cos(\psi_0) & \sin(\phi_0)\sin(\psi_0) \\
    \cos(\phi_0) & 0 & \sin(\phi_0)\sin(\psi_0) & -\sin(\phi_0)\cos(\psi_0) \\
    \sin(\phi_0)\cos(\psi_0) & -\sin(\phi_0)\sin(\psi_0) & 0 & \cos(\phi_0) \\
    \sin(\phi_0)\sin(\psi_0) & \sin(\phi_0)\cos(\psi_0) & -\cos(\phi_0) & 0
  \end{bmatrix}.
\end{equation*}
If $F = f\circ\eta^{-1}$ then a routine computation yields
\begin{equation*}
(T_{\theta}F)(\overrightarrow{x})=\frac{1}{4\pi}\int_{0}^{\pi}\int_{0}^{2\pi}f(x(\phi_0,\theta_0,\psi_0)y^{-1}(\phi,\theta,\psi))
    \sin(\phi)d\psi d\phi
\end{equation*}
and hence it follows that
\begin{equation}\label{SphericalTranslationIdentity}
(T_{\theta}F)(\overrightarrow{x}) = [Q_{x}f](\theta)
\end{equation}
for all $0 < \theta < \pi$.

Let $\theta \in [0,\pi]$. Define the difference operator $\delta_{\theta}$ on $L^2(SU(2))$ by
\begin{equation*}
  \delta_{\theta}(f)(x) = f(x) - [Q_{x}f](\theta)
\end{equation*}
and let
\begin{equation*}
  \Omega(f,t)=\sup\{\Vert\delta_{\theta}f\Vert_{L^2(SU(2))}:0 < \theta \leq t\}
\end{equation*}
denote the integral modulus of continuity of $f \in L^2(SU(2))$. Using Theorem \ref{Dai} and identity
(\ref{SphericalTranslationIdentity}) yields the following almost everywhere convergence result on $SU(2)$.
\begin{theorem}
Let $f \in L^2(SU(2))$.  If
\begin{equation*}
\int_0^1\frac{\Omega^2(f,t)}{t}dt < \infty
\end{equation*}
then the Fourier partial sums of $f$ at $x$ converge to $f(x)$ $\mu$-almost everywhere on $SU(2)$.
\end{theorem}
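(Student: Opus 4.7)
The plan is to transfer the entire problem from $SU(2)$ to the sphere $S^3$ via the isometric homeomorphism $\eta$ and then invoke Theorem \ref{Dai} with $n=3$. All of the necessary machinery has been set up in the excerpt; the argument amounts to checking that the hypothesis in Theorem \ref{Dai} is exactly the hypothesis of the stated result and that the conclusion transfers back.

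First I would set $F = f\circ\eta^{-1}$. Since $\eta$ is a measure-preserving isometry in the sense that $\sigma_3(E)=\mu(\eta^{-1}(E))$ on spherical boxes (and hence on all Borel sets by the standard extension), we have $F\in L^2(S^3,\sigma_3)$ with $\|F\|_{L^2(S^3)}=\|f\|_{L^2(SU(2))}$. The next step is to verify that $\omega(F,t)=\Omega(f,t)$ for every $t\in(0,1]$. Writing $\overrightarrow{x}=\eta(x)$, identity (\ref{SphericalTranslationIdentity}) gives $(T_\theta F)(\overrightarrow{x})=[Q_x f](\theta)$, so
\begin{equation*}
(\Delta_\theta F)(\overrightarrow{x}) = F(\overrightarrow{x})-(T_\theta F)(\overrightarrow{x}) = f(x)-[Q_x f](\theta) = \delta_\theta(f)(x).
\end{equation*}
Taking $L^2$ norms and using that $\eta$ is measure-preserving yields $\|\Delta_\theta F\|_{L^2(S^3)}=\|\delta_\theta f\|_{L^2(SU(2))}$ for each $\theta\in(0,\pi)$, and then taking the supremum over $\theta\in(0,t]$ gives $\omega(F,t)=\Omega(f,t)$.

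Consequently, the Dini-type hypothesis $\int_0^1\Omega^2(f,t)t^{-1}\,dt<\infty$ is identical to $\int_0^1\omega^2(F,t)t^{-1}\,dt<\infty$, so Theorem \ref{Dai} applies and the Fourier-Laplace partial sums of $F$ converge to $F(\overrightarrow{x})$ for $\sigma_3$-almost every $\overrightarrow{x}\in S^3$. As recorded earlier in the excerpt, the $N$th Fourier-Laplace partial sum of $F$ agrees with the $N$th Fourier partial sum of $f=F\circ\eta$ under the identification $\overrightarrow{x}=\eta(x)$; that is,
\begin{equation*}
(S_N f)(x)=(S_N^{\mathrm{Laplace}} F)(\eta(x))\qquad(N=0,1,2,\ldots).
\end{equation*}
Let $N\subset S^3$ be the $\sigma_3$-null exceptional set from Theorem \ref{Dai}. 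Then $\eta^{-1}(N)\subset SU(2)$ is $\mu$-null (again because $\sigma_3\circ\eta=\mu$), and for every $x\in SU(2)\setminus\eta^{-1}(N)$ we have $S_N f(x)=(S_N^{\mathrm{Laplace}} F)(\eta(x))\to F(\eta(x))=f(x)$.

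There is no genuine obstacle in this argument: every ingredient is already in place in the excerpt. The only point requiring a little care is the measure-preserving property needed to pass between $\|\cdot\|_{L^2(S^3)}$ and $\|\cdot\|_{L^2(SU(2))}$, which follows from (\ref{HaarIntegral}) and the displayed spherical parametrization, together with the standard $\pi$-$\lambda$ (or monotone class) extension from spherical boxes to arbitrary Borel sets. Once that is noted, the proof reduces to the three identifications $F=f\circ\eta^{-1}$, $\omega(F,\cdot)=\Omega(f,\cdot)$, and $S_N^{\mathrm{Laplace}} F\circ\eta=S_N f$, followed by a direct appeal to Theorem \ref{Dai}.
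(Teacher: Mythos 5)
Your proof is correct and follows exactly the route the paper takes: the paper's own (one-line) argument is to combine Theorem \ref{Dai} with identity (\ref{SphericalTranslationIdentity}), the equality of partial sums under $\eta$, and the correspondence of null sets, all of which you have simply written out in full. No discrepancies.
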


The next three corollaries are immediate consequences.
\begin{cor}
Let $f \in L^2(SU(2))$.  If $\Omega(f,t)=O(\frac{1}{\log^{\beta}(1/t)})$ for some $\beta > 1/2$ then
$S_{N}f(x) \rightarrow f(x)$ for almost every $x \in SU(2)$.
\end{cor}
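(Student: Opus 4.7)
The plan is to deduce this corollary directly from the previous theorem by verifying that the stated growth condition on $\Omega(f,t)$ implies the Dini-type integrability hypothesis
\begin{equation*}
    \int_0^1 \frac{\Omega^2(f,t)}{t}\,dt < \infty.
\end{equation*}
Once this is established, the conclusion is immediate from the previous theorem; no further machinery from the paper is required.

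To check integrability, I would first split the integral at some convenient value, say $t = 1/e$, isolating the region where the logarithm is bounded away from zero. On $[1/e, 1]$, the modulus of continuity $\Omega(f,t)$ is uniformly bounded (by $2\|f\|_{L^2(SU(2))}$, say), so the integrand is bounded and the contribution to the integral is finite. On $(0,1/e]$, the hypothesis gives a constant $C > 0$ with $\Omega(f,t) \leq C/\log^{\beta}(1/t)$, and hence
\begin{equation*}
    \int_0^{1/e}\frac{\Omega^2(f,t)}{t}\,dt \leq C^2\int_0^{1/e}\frac{dt}{t\log^{2\beta}(1/t)}.
\end{equation*}

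The remaining step is a routine substitution: with $u = \log(1/t)$, $du = -dt/t$, the integral on the right transforms into $\int_1^{\infty} u^{-2\beta}\,du$, which converges precisely when $2\beta > 1$, that is, when $\beta > 1/2$. This is exactly the hypothesis, so the full integral $\int_0^1 \Omega^2(f,t)/t\,dt$ is finite, and the previous theorem yields $S_N f(x) \to f(x)$ for $\mu$-almost every $x \in SU(2)$.

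There is no substantive obstacle here; the corollary is a transparent application of the previous theorem together with an elementary $p$-integral test. The only thing that could go wrong would be an off-by-one in the exponent after the logarithmic substitution, so I would double-check that squaring $\Omega(f,t)$ turns the exponent $\beta$ into $2\beta$ before applying the convergence criterion $2\beta > 1$.
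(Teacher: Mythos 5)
Your proposal is correct and is exactly the argument the paper intends: the paper presents this corollary as an ``immediate consequence'' of the preceding theorem without writing out the computation, and your splitting at $t=1/e$ followed by the substitution $u=\log(1/t)$, reducing matters to the convergence of $\int_1^{\infty}u^{-2\beta}\,du$ for $2\beta>1$, is the standard verification. Nothing further is needed.
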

\begin{cor}
Let $f \in L^2(SU(2))$.  If $\Omega(f,t)=O\left(t^{\alpha}\right)$ for some $\alpha \in (0,1)$ then
$S_{N}f(x) \rightarrow f(x)$ for almost every $x \in SU(2)$.
\end{cor}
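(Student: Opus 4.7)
The plan is to apply the theorem immediately preceding this corollary, whose hypothesis is the Dini-type integral condition $\int_0^1 \Omega^2(f,t)/t\,dt < \infty$. So the only task is to verify that the assumed $O(t^\alpha)$ growth of $\Omega(f,t)$ forces this integral to converge.

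First, translate the hypothesis into an inequality: there exists a constant $C > 0$ and $t_0 \in (0,1]$ such that $\Omega(f,t) \leq Ct^\alpha$ for every $t \in (0,t_0]$. Squaring gives $\Omega^2(f,t) \leq C^2 t^{2\alpha}$ on $(0,t_0]$. Since $\Omega(f,t)$ is nondecreasing in $t$ and $\Omega(f,t) \leq 2\|f\|_{L^2(SU(2))}$ by the triangle inequality applied in the definition of $\delta_\theta$, the integrand $\Omega^2(f,t)/t$ is locally integrable on $[t_0,1]$ (it is bounded there with $t$ bounded away from $0$).

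Next, estimate the part near the origin:
\begin{equation*}
\int_0^{t_0}\frac{\Omega^2(f,t)}{t}\,dt \leq C^2 \int_0^{t_0} t^{2\alpha-1}\,dt = \frac{C^2 t_0^{2\alpha}}{2\alpha} < \infty
\end{equation*}
since $\alpha > 0$ makes the exponent $2\alpha - 1 > -1$. Combining the two pieces gives $\int_0^1 \Omega^2(f,t)/t\,dt < \infty$.

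With the hypothesis of the preceding theorem verified, its conclusion applies directly: $S_N f(x) \to f(x)$ for $\mu$-almost every $x \in SU(2)$. There is no genuine obstacle here; the entire argument is a one-line verification of a convergent power-type integral, which is exactly why the authors label the corollary as an immediate consequence.
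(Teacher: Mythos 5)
Your proposal is correct and is exactly the argument the paper intends: the authors give no explicit proof, stating only that the corollary is an immediate consequence of the preceding theorem, and your verification that $\Omega(f,t)=O(t^{\alpha})$ with $\alpha>0$ forces $\int_0^1 \Omega^2(f,t)\,t^{-1}\,dt<\infty$ (via $\int_0^{t_0} t^{2\alpha-1}\,dt<\infty$ plus boundedness of $\Omega$ away from the origin) is precisely that omitted step.
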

\begin{cor}
Let $f \in \textrm{Lip}_{\alpha}(SU(2))$ for some $\alpha \in (0,1)$.  Then $S_{N}f(x) \rightarrow f(x)$ for
almost every $x \in SU(2)$.
\end{cor}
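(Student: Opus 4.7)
The plan is to invoke the immediately preceding corollary by showing that every $f\in\textrm{Lip}_{\alpha}(SU(2))$ automatically satisfies $\Omega(f,t)=O(t^{\alpha})$. Since that corollary is already stated and available, the entire proof reduces to a single pointwise estimate on the difference $\delta_{\theta}f(x)=f(x)-[Q_{x}f](\theta)$.

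First I would compute the distance $d(x,xy^{-1}(\phi,\theta,\psi))$ from $x$ to each point appearing in the average defining $Q_{x}f$. Using the explicit parametrization of $y(\phi,\theta,\psi)$ recorded in the preliminaries together with $d(a,b)^{2}=\frac{1}{2}\textrm{tr}((a-b)(a-b)^{*})$, a direct matrix computation gives
\begin{equation*}
d(e,y(\phi,\theta,\psi))^{2}=2(1-\cos\theta)=4\sin^{2}(\theta/2),
\end{equation*}
independently of $\phi$ and $\psi$. The bi-invariance and symmetry of $d$ then yield $d(e,y^{-1})=d(e,y)$, so
\begin{equation*}
d(x,xy^{-1}(\phi,\theta,\psi))=d(e,y(\phi,\theta,\psi))=2\sin(\theta/2)\leq\theta.
\end{equation*}

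Next, for $f\in\textrm{Lip}_{\alpha}(SU(2))$ with H\"{o}lder constant $M$, and using that $\frac{1}{4\pi}\sin(\phi)\,d\psi\,d\phi$ is a probability measure on $[0,\pi]\times[0,2\pi]$, I would obtain the uniform pointwise bound
\begin{equation*}
|\delta_{\theta}f(x)|\leq\frac{1}{4\pi}\int_{0}^{\pi}\int_{0}^{2\pi}|f(x)-f(xy^{-1}(\phi,\theta,\psi))|\sin(\phi)\,d\psi\,d\phi\leq M\theta^{\alpha}.
\end{equation*}
Consequently $\Vert\delta_{\theta}f\Vert_{L^{2}(SU(2))}\leq M\theta^{\alpha}$, so $\Omega(f,t)\leq Mt^{\alpha}$, and the preceding corollary delivers the desired almost everywhere convergence.

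There is essentially no obstacle: the proof is bookkeeping, with the only nontrivial step being the metric computation certifying that the spherical averages in $[Q_{x}f](\theta)$ sample $f$ only within distance $\theta$ of $x$. All the analytically delicate work has already been absorbed into Theorem \ref{Dai} and the translation via identity (\ref{SphericalTranslationIdentity}).
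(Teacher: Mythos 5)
Your proposal is correct and follows exactly the route the paper intends: the paper declares this corollary an immediate consequence of the preceding one, and your argument supplies the verification that $f\in\textrm{Lip}_{\alpha}(SU(2))$ implies $\Omega(f,t)=O(t^{\alpha})$. The metric computation $d(x,xy^{-1}(\phi,\theta,\psi))=2\sin(\theta/2)\leq\theta$ and the averaging over the probability measure $\frac{1}{4\pi}\sin(\phi)\,d\psi\,d\phi$ are both accurate, so the pointwise bound $|\delta_{\theta}f(x)|\leq M\theta^{\alpha}$ holds and the chain of corollaries closes.
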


\end{document}